\numberwithin{equation}{section}
\DeclareMathOperator{\card}{card}
\title{Bi-Lipschitz embeddings of quasiconformal trees}
\date{\today}
\author{Guy C. David}
\address{Department of Mathematical Sciences\\ Ball State University\\ Muncie, IN 47306}
\email{gcdavid@bsu.edu}
\author{Sylvester Eriksson-Bique}
\address{Research Unit of Mathematical Sciences,
P.O.Box 3000,
FI-90014 Oulu, Finland}
\email{sylvester.eriksson-bique@oulu.fi}
\author{Vyron Vellis}
\address{Department of Mathematics\\ The University of Tennessee\\ Knoxville, TN 37966}
\email{vvellis@utk.edu}
\thanks{G.~C.~David was partially supported by NSF DMS grant 1758709. S.~Eriksson-Bique was partially supported by the Finnish Academy grant  345005. V.~Vellis was partially supported by NSF DMS grant 1952510.}
\subjclass[2010]{30L05}
\begin{document}

\theoremstyle{plain}
\newtheorem{theorem}{Theorem}
\newtheorem{exercise}{Exercise}
\newtheorem{corollary}[theorem]{Corollary}
\newtheorem{scholium}[theorem]{Scholium}
\newtheorem{claim}[theorem]{Claim}
\newtheorem{lemma}[theorem]{Lemma}
\newtheorem{sublemma}[theorem]{Lemma}
\newtheorem{proposition}[theorem]{Proposition}
\newtheorem{conjecture}[theorem]{Conjecture}
\newtheorem{maintheorem}{Theorem}
\newtheorem{maincor}[maintheorem]{Corollary}
\newtheorem{mainproposition}[maintheorem]{Proposition}
\renewcommand{\themaintheorem}{\Alph{maintheorem}}

\theoremstyle{definition}
\newtheorem{fact}[theorem]{Fact}
\newtheorem{example}[theorem]{Example}
\newtheorem{definition}[theorem]{Definition}
\newtheorem{remark}[theorem]{Remark}
\newtheorem{question}[theorem]{Question}

\numberwithin{equation}{section}
\numberwithin{theorem}{section}

\newcommand{\cG}{\mathcal{G}}
\newcommand{\RR}{\mathbb{R}}
\newcommand{\HH}{\mathcal{H}}
\newcommand{\LIP}{\textnormal{LIP}}
\newcommand{\Lip}{\textnormal{Lip}}
\newcommand{\Tan}{\textnormal{Tan}}
\newcommand{\length}{\textnormal{length}}
\newcommand{\dist}{\textnormal{dist}}
\newcommand{\diam}{\textnormal{diam}}
\newcommand{\vol}{\textnormal{vol}}
\newcommand{\rad}{\textnormal{rad}}
\newcommand{\side}{\textnormal{side}}

\def\bA{{\mathbb{A}}}
\def\bB{{\mathbb{B}}}
\def\bC{{\mathbb{C}}}
\def\bD{{\mathbb{D}}}
\def\bR{{\mathbb{R}}}
\def\bS{{\mathbb{S}}}
\def\bO{{\mathbb{O}}}
\def\bE{{\mathbb{E}}}
\def\bF{{\mathbb{F}}}
\def\bH{{\mathbb{H}}}
\def\bI{{\mathbb{I}}}
\def\bT{{\mathbb{T}}}
\def\bZ{{\mathbb{Z}}}
\def\bX{{\mathbb{X}}}
\def\bP{{\mathbb{P}}}
\def\bN{{\mathbb{N}}}
\def\bQ{{\mathbb{Q}}}
\def\bK{{\mathbb{K}}}
\def\bG{{\mathbb{G}}}

\def\nrj{{\mathcal{E}}}
\def\cA{{\mathscr{A}}}
\def\cB{{\mathscr{B}}}
\def\cC{{\mathscr{C}}}
\def\cD{{\mathscr{D}}}
\def\cE{{\mathscr{E}}}
\def\cF{{\mathscr{F}}}
\def\cB{{\mathscr{G}}}
\def\cH{{\mathscr{H}}}
\def\cI{{\mathscr{I}}}
\def\cJ{{\mathscr{J}}}
\def\cK{{\mathscr{K}}}
\def\Layer{{\rm Layer}}
\def\cM{{\mathscr{M}}}
\def\cN{{\mathscr{N}}}
\def\cO{{\mathscr{O}}}
\def\cP{{\mathscr{P}}}
\def\cQ{{\mathscr{Q}}}
\def\cR{{\mathscr{R}}}
\def\cS{{\mathscr{S}}}
\def\Up{{\rm Up}}
\def\cU{{\mathscr{U}}}
\def\cV{{\mathscr{V}}}
\def\cW{{\mathscr{W}}}
\def\cX{{\mathscr{X}}}
\def\cY{{\mathscr{Y}}}
\def\cZ{{\mathscr{Z}}}

  \def\del{\partial}
  \def\diam{{\rm diam}}
	\def\FF{{\mathcal{F}}}
	\def\QQ{{\mathcal{Q}}}
	\def\BB{{\mathcal{B}}}
	\def\XX{{\mathcal{X}}}
	\def\PP{{\mathcal{P}}}

  \def\del{\partial}
  \def\diam{{\rm diam}}
	\def\image{{\rm Image}}
	\def\domain{{\rm Domain}}
  \def\dist{{\rm dist}}

\newcommand{\GD}[1]{{  \color{red} \textbf{Guy:} #1}}
\newcommand{\SEB}[1]{{  \color{blue} \textbf{Sylvester:} #1}}
\newcommand{\VV}[1]{{  \color{green} \textbf{Vyron:} #1}}

\begin{abstract}
A quasiconformal tree is a doubling metric tree in which the diameter of each arc is bounded above by a fixed multiple of the distance between its endpoints. In this paper we show that every quasiconformal tree bi-Lipschitz embeds in some Euclidean space, with the ambient dimension and the bi-Lipschitz constant depending only on the doubling and bounded turning constants of the tree. This answers Question 1.6 in \cite{DV}.
\end{abstract}

\maketitle

\section{Introduction}
In this paper, a \textit{(metric) tree} is a compact, connected metric space with the property that each pair of distinct points forms the endpoints of a unique arc. We show that all metric trees that satisfy two simple geometric properties can be embedded in a Euclidean space with bounded distortion.

The two properties that we impose are \textit{doubling} and \textit{bounded turning}. Recall that a metric space is called doubling if each ball in the space can be covered by $N$ balls of half the radius, for some fixed constant $N$. A metric space is called bounded turning if each pair of points $x,y$ in the space are contained in a compact, connected set whose diameter is bounded by $Cd(x,y)$, for some fixed constant $C$. In the case of metric trees, this is equivalent to saying that the unique arc joining a pair of points has diameter comparable to the distance between those points.

The class of trees satisfying these two properties were studied in detail in \cite{Kinneberg, BM1, BM2, DV}, and given the following name:

\begin{definition}
A \textit{quasiconformal tree} is a metric tree that is doubling and bounded turning.
\end{definition}

We refer the reader to \cite{BM1, DV} for more discussion on the history of quasiconformal trees, and the ways in which they arise naturally in metric geometry and complex analysis.

Our main theorem says that all quasiconformal trees bi-Lipschitz embed in some Euclidean space in a quantitative fashion: 

\begin{theorem}\label{thm:main}
If $T$ is a quasiconformal tree, then $T$ admits a bi-Lipschitz embedding into some $\RR^k$. The dimension $k$ and the bi-Lipschitz constant of the embedding depend only on the doubling and bounded turning constants of $T$.
\end{theorem}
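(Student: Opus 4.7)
My plan is to construct an explicit bi-Lipschitz embedding of $T$ into some $\RR^k$ by combining a hierarchical dyadic decomposition of $T$ into subarcs with a bounded-palette coloring of those arcs.

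\emph{Step 1: reduce to a geodesic metric.} Because $T$ has bounded turning, the arc-length metric on $T$ is bi-Lipschitz equivalent to $d$ with constants depending only on the bounded turning constant. Replacing $d$ by arc length, we may assume throughout that $T$ is geodesic.

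\emph{Step 2: hierarchical arc decomposition.} For each $n \geq 0$, I construct a family $\mathcal{E}_n$ of closed subarcs such that: the arcs of $\mathcal{E}_n$ tile $T$ and meet only at endpoints; each arc has diameter $\asymp 2^{-n}\diam T$; $\mathcal{E}_{n+1}$ refines $\mathcal{E}_n$, with each element of $\mathcal{E}_n$ subdivided into boundedly many elements of $\mathcal{E}_{n+1}$; and each ball of radius $2^{-n}$ meets at most $M$ arcs of $\mathcal{E}_n$, where $M$ depends only on the doubling constant. This is a tree-adapted version of the standard Christ dyadic-cube construction, analogous to decompositions used in \cite{DV}.

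\emph{Step 3: coloring.} The proximity graph on $\mathcal{E}_n$ (two arcs adjacent if they share an endpoint or lie within $C 2^{-n}$ of each other) has bounded degree by the last property in Step 2. A greedy coloring argument gives a proper $K$-coloring with $K$ depending only on the doubling constant. I use the same palette $\{1,\dots,K\}$ at every scale, and I impose a scale-compatibility condition on the coloring: each child $e' \in \mathcal{E}_{n+1}$ of $e \in \mathcal{E}_n$ is assigned a color that is aligned (either equal, or in a fixed prescribed relation) with $c(e)$, so that finer-scale direction choices do not cancel the coarse direction.

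\emph{Step 4: embedding.} Fix a basepoint $p \in T$ and $K$ orthonormal vectors $v_1, \dots, v_K \in \RR^K$. For $x \in T$ and each $n$, define a scale-$n$ contribution $f_n(x)$ that sums $\ell(e) v_{c(e)}$ over the scale-$n$ arcs $e$ traversed by the unique arc $[p,x]$ from $p$ to $x$. Assemble $f = (f_0, f_1, \dots)$ into a finite-dimensional Euclidean space either by placing successive scales in separate coordinate blocks with an $\ell^2$-sum or, when the geometry permits, by a carefully weighted sum back into a single copy of $\RR^K$.

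\emph{Step 5: bi-Lipschitz bounds.} The Lipschitz upper bound is essentially telescoping: the $f$-displacement along any arc is bounded by that arc's length. For the lower bound, fix $x \neq y$, let $z$ be the branch point (the last point on both $[p,x]$ and $[p,y]$), and let $n_0$ be the scale with $2^{-n_0} \asymp d(x,y)$. The two scale-$n_0$ arcs emerging from $z$ toward $x$ and $y$ have distinct colors by Step 3, so the scale-$n_0$ contribution to $f(x) - f(y)$ has magnitude $\gtrsim d(x,y)$ in a specific direction $v_c$.

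\emph{Main obstacle.} The central technical difficulty is to combine contributions from infinitely many scales into a finite-dimensional bi-Lipschitz (rather than merely snowflake) embedding. A naive weighted summation of the $f_n$ gives only an Assouad-type H\"older embedding, with exponent strictly greater than $1$. Achieving genuine bi-Lipschitz behavior requires exploiting the scale-compatibility of the coloring (Step 3) to prove that the scale-$n_0$ "separation signal" in the lower bound is not eroded by finer-scale cross-terms, and that coarser-scale contributions either cancel symmetrically between $[p,x]$ and $[p,y]$ or reinforce the main direction. Establishing this structural lemma about the interaction between the tree decomposition, the coloring, and the direction assignments is where I expect the bulk of the work to lie.
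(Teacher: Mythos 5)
There is a genuine gap, and it occurs at the very first step. Your Step 1 claims that because $T$ is bounded turning, the arc-length metric on $T$ is bi-Lipschitz equivalent to $d$, so one may assume $T$ is geodesic. This is false: a quasiconformal tree need not be rectifiable at all. The von Koch snowflake arc is doubling and bounded turning, yet every nondegenerate subarc has infinite length, so there is no arc-length metric to pass to, let alone one bi-Lipschitz equivalent to $d$. Indeed, if such a reduction were available, the theorem would follow immediately from the known embedding result for geodesic doubling trees (Gupta--Krauthgamer--Lee), and the whole point of the problem --- emphasized in the paper's introduction, where it is noted that branches of a quasiconformal tree may be fractal curves --- would evaporate. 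This also undermines Step 4, where you sum quantities $\ell(e)v_{c(e)}$ involving arc length: in the actual argument one cannot represent a piece of the tree by its length times a direction; instead one must invoke the nontrivial fact that quasiarcs (and finite unions of them) admit bi-Lipschitz embeddings into Euclidean space, and use those embeddings as the ``local charts.''

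Beyond that, your own ``Main obstacle'' paragraph concedes the crux: you do not establish the structural lemma that prevents finer scales from eroding the scale-$n_0$ separation, and it is not clear that a ``scale-compatible'' coloring with the properties you want even exists. The paper's route avoids this multi-scale cancellation problem entirely by a different decomposition: it takes nested $2^{-n}$-nets of the \emph{leaf set}, forms the convex hulls $T_n$, and lets the pieces be the components of $\overline{T_n\setminus T_{n-1}}$. Each piece is a quasiconformal tree with boundedly many leaves, hence a union of boundedly many quasiarcs, hence admits a uniformly bi-Lipschitz embedding; each point of $T$ is handled by essentially one piece at one scale, rather than by a full hierarchy of nested arcs. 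The two further ingredients are a monotonicity lemma (along any arc, the scales of the traversed pieces first decrease and then increase), which makes the telescoping sums geometric, and a bounded coloring of a proximity graph so that nearby pieces map into orthogonal coordinate blocks, which yields the lower bound via Cauchy--Schwarz. Without a replacement for your Step 1 and a proof of the structural lemma you defer, the proposal does not constitute a proof.
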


This answers a question posed explicitly in \cite[Question 1.6]{DV}. As a reminder, a bi-Lipschitz embedding (recalled precisely in Section \ref{sec:prelim}) is an embedding that preserves all distances up to a fixed constant factor. The ``bi-Lipschitz embedding problem'' (the classification of metric spaces that admit a bi-Lipschitz embedding into some $\RR^n$), is one of the most well-known problems in the field of analysis on metric spaces. (See, e.g., \cite[Open Problem 12.3]{Heinonen}.)

The problem for quasiconformal trees is especially interesting as quasiconformal trees stand between two types of metric spaces with different behaviors. On the one hand, quasiarcs (i.e. doubling and bounded turning arcs) are a special case of quasiconformal trees and are known to always bi-Lipschitz embed into Euclidean spaces \cite[Proposition 8.1]{DV}. On the other hand, if we generalize further to what we might call ``metric graphs'' -- compact, path-connected metric spaces of topological dimension 1 -- then there exist examples that are doubling and bounded turning, generalizing quasiconformal trees, but that do not admit such embeddings. For examples, see \cite[Theorem 4.1]{Laakso} and \cite[Theorem 2.3]{LP}.

Bi-Lipschitz embeddability of quasiconformal trees was previously known in only two special cases. First, Gupta, Krauthgamer, and Lee \cite{GKL} (see also \cite{GT}) proved that if a doubling tree is \emph{geodesic} (that is, the distance between any two points of the tree is equal to the length of the unique arc that joins them), then the tree bi-Lipschitz embeds into some Euclidean space $\bR^{N}$ with $N$ depending only on the doubling constant of the tree; see also \cite{LNP} for a different proof. Note that the geodesic property is much stronger than the bounded turning property: in geodesic trees, every branch is necessarily isometric to a line segment, while in quasiconformal trees the branches may be fractal curves (e.g., the von Koch snowflake). Second, in \cite{DV} the first and third named authors proved that a quasiconformal tree bi-Lipschitz embeds in some Euclidean space if and only if the set of \emph{leaves} of the tree bi-Lipschitz embeds in some Euclidean space; see Section \ref{sec:prelim} for definitions. While the latter result is useful in cases where the embedabbility of the set of leaves is clear (e.g. the set of leaves is uniformly disconnected), there are examples of quasiconformal trees whose leaves are dense in the tree, and the result is inconclusive.

Our new construction in Theorem \ref{thm:main} owes some ideas to the well-known theorem of Assouad \cite{Assouad} that every doubling metric space admits a ``snowflake'' embedding into some Euclidean space, but new ideas are needed to improve snowflake to bi-Lipschitz in the case of quasiconformal trees. 

The crucial new idea is a novel partition of the tree into infinitely many simpler parts. Our partition bears a resemblance to the ``path partition'' used by Matou\v{s}ek in \cite[p. 231]{Matousek} to produce bi-Lipschitz embeddings of (discrete) \textit{geodesic} trees into $\ell_p$-spaces, but there are also significant differences. Other decompositions of quasiconformal trees have been produced in \cite{BM1,DV}, but they are less similar to our construction. 

Before ending the introduction, we note that, while the doubling property is necessary for the bi-Lipschitz embedabbility of any metric space into a Euclidean space, the bounded turning property is not. It is, however, a natural condition to consider and it cannot be removed from the statement of Theorem \ref{thm:main} (even for arcs), as the following example shows.

\begin{example}
Let $C\subseteq \RR$ be the standard Cantor set and $F\subseteq \RR$ be a Cantor set of positive $1$-dimensional Lebesgue measure. Let $E = F\times F\times F\subseteq \RR^3$, a Cantor set of positive $3$-dimensional Lebesgue measure. By \cite[Corollary 2]{Mcmillan}, $E$ is a \textit{tame} Cantor set, and so there is a homeomorphism from $\RR^3$ to itself that sends $C\times \{0\} \times \{0\}$ onto $E$. In particular, there is a topological arc $\Gamma\subseteq \RR^3$ containing $E$.

Now equip $\RR^3$ with the Heisenberg group metric $d_\mathbb{H}$. (See, e.g., \cite[p. 76]{Heinonen} for an introduction to the Heisenberg group.) It is a consequence of the celebrated differentiation theorem of Pansu \cite{Pansu} that the Heisenberg group (and any positive-measure subset), although doubling, admits no bi-Lipschitz embedding into any Euclidean space \cite[p. 99]{Heinonen}. Thus, $\Gamma\subseteq (\RR^3, d_{\mathbb{H}})$ is a doubling arc with no bi-Lipschitz embedding into any Euclidean space. 
\end{example}

\subsection{Outline of the paper}
Section \ref{sec:prelim} contains the basic definitions and notation used in the paper. In Section \ref{sec:decomposition}, we describe our novel way of decomposing a quasiconformal tree into useful pieces. Finally, Section \ref{sec:embedding} defines the embedding and proves that it is bi-Lipschitz.

\section{Preliminaries}\label{sec:prelim}
\subsection{Metric spaces and mappings}
We generally denote metrics on metric spaces by $d$. The \textit{diameter} of a subset $E$ in a metric space $X$ is
$$ \diam(E) = \sup\{d(x,y):x,y\in E\}.$$

Given $\epsilon>0$, an \textit{$\epsilon$-net} in a metric space $X$ is a subset $N\subseteq X$ such that $d(x,y)\geq \epsilon$ for all $x,y\in N$ and $\dist(p, N) < \epsilon$ for all $p\in X$. Each metric space has $\epsilon$-nets for every $\epsilon>0$. Moreover, each $\epsilon$-separated set in $X$ (that is, a set satisfying only the first condition to be a net) can be extended to an $\epsilon$-net of $X$. 

A mapping $f\colon X\rightarrow Y$ between two metric spaces is Lipschitz (or $L$-Lipschitz to emphasize the constant) if there is a constant $L$ such that
$$ d(f(x), f(y)) \leq L \cdot d(x,y) \text{ for all } x,y\in X.$$
The mapping $f$ is bi-Lipschitz (or $(a,b)$-bi-Lipschitz to emphasize the constants) if there are constants $a,b>0$ such that
$$ ad(x,y) \leq d(f(x),f(y)) \leq bd(x,y) \text{ for all } x,y \in X.$$

\subsection{Trees}
If $T$ is a metric tree, we write $\mathcal{L}(T)$ for the set of leaves of $T$. A point $x$ is called a leaf of $T$ if $T\setminus\{x\}$ is connected.

If $x$ and $y$ are points in a metric tree $T$, we write $[x,y]$ for the unique topological arc joining $x$ and $y$ in $T$, with the understanding that $[x,y]=\{x\}=\{y\}$ if $x=y$. Often it is convenient to orient an arc $[x,y]$ ($x\neq y$), so we assume that $[x,y]$ is equipped with a a continuous parametrization $\gamma\colon [0,1] \rightarrow [x,y]$ such that $\gamma(0)=x$ and $\gamma(1)=y$. This allows us to order subsets of $[x,y]$; we call this the ``natural order'' along the arc. If $K$ is a non-empty compact subset of $[x,y]$, we often speak of the ``first'' or ``last'' point of $K$ with respect to this ordering.

\section{Decomposing the tree}\label{sec:decomposition}

In this section, we give a new way to decompose a quasiconformal tree into useful pieces. This differs from other decompositions given in, e.g., \cite{BM1, DV}.

For the remainder of this section, we fix a tree $T$ that is doubling with constant $D$ and bounded turning with constant $1$. (One can always reduce to this case; see Section \ref{sec:embedding}.) Because $T$ is assumed to be $1$-bounded turning, we have $\diam(T)=\diam(\mathcal{L}(T))$, and we rescale so that $\diam(T)=\diam(\mathcal{L}(T))=1$. Finally, we will assume that $\mathcal{L}(T)$ -- the set of leaves of $T$ -- is finite. This assumption is not strictly necessary, and none of the constants below will depend on the cardinality of $\mathcal{L}(T)$, but it avoids certain technical difficulties. In the course of proving Theorem \ref{thm:main}, we will reduce to this case regardless. (See Section \ref{sec:embedding}.) Remark \ref{rmk:infinitecase} explains some of the annoyances that finiteness prevents.

We fix a collection $\{N_n\}_{n\geq 1}$ of $2^{-n}$-nets in $\mathcal{L}(T)$ with the property that $N_n \subseteq N_m$ if $n\leq m$. The doubling property of $T$ implies that each $N_n$ is finite. The assumption that $\mathcal{L}(T)$ is finite also implies that $N_n=\mathcal{L}(T)$ for all $n$ sufficiently large.  

We then define $T_n\subseteq T$ to be the ``convex hull'' of $N_n$, i.e., 
$$ T_n = \bigcup_{a,b\in N_n} [a,b].$$
Of course, $T_n \subseteq T_{n+1}$ for each $n$.

For each $n\geq 2$, the compact set $\overline{T_{n} \setminus T_{n-1}}$ is the disjoint union of finitely many compact connected components, which we denote $\{K^j_{n}\}_{j\in J_{n}}$. For convenience, we also set $K^1_1=T_1$ and $J_1=\{1\}$. Under our finiteness assumption, it is the case that $J_n = \emptyset$ for all $n$ sufficiently large, but none of our bounds will depend on this threshold. 

In the next two lemmas we collect some basic properties of the sets $K^j_{n}$. 

\begin{lemma}\label{lem:Kproperties}
The following properties of the sets $K^j_n$ hold:
\begin{enumerate}[(i)]
\item\label{eq:Ki} For each $n\in\mathbb{N}$ and $j\in J_n$, the set $K_{n}^j$ is a quasiconformal tree.
\item\label{eq:Kii}  If $n\geq 2$, then $K^j_n$ intersects $T_{n-1}$ in exactly one point. Moreover, for each $n \in \mathbb{N}$, $T_n \cap \overline{T_{n+1}\setminus T_n}$ contains a finite number of points.
\item If $n\in\mathbb{N}$ and $j,j' \in J_n$ with $j\neq j'$, then $K_n^{j} \cap K_n^{j'} = \emptyset$. If $n,m \in \mathbb{N}$ with $n\neq m$, $j\in J_n$, and $j'\in J_m$, then $K^i_n$ and $K^j_m$ intersect in at most one point.
\item\label{eq:Kiii} For each $n\in \mathbb{N}$ and $j\in J_n$, the set $\mathcal{L}(K^j_n)$ is contained in $T_{n-1} \cup (N_n\setminus N_{n-1})$ and contains at least one element of $N_n\setminus N_{n-1}$.  Moreover, the set $K_n^j \cap \mathcal{L}(T)$ is disjoint from $K_m^i$ for every $m\in\mathbb{N}, i\in J_m$.
\item\label{eq:Kv} We have
$$ T \subseteq \bigcup_{n\geq 1} \bigcup_{j\in J_n} K^j_n,$$
i.e., every point of $T$ is contained in some $K^j_n$. 
\end{enumerate}
\end{lemma}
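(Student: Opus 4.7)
The plan is to use the tree structure of $T$ together with the finiteness assumption, which renders each $T_n$ a finite tree with at most finitely many branches at each point. For (i), each $K_n^j$ is closed and connected in $T$ by definition and hence a metric tree in its own right; doubling is inherited from $T$, and since arcs in metric trees are unique, the arc in $K_n^j$ between any two of its points coincides with the ambient arc in $T$, so bounded turning with constant $1$ transfers directly.

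The crux is (ii). I would first split the open set $T_n \setminus T_{n-1}$ into its finitely many connected components $\{U_\alpha\}$, each open since $T_n$ is locally connected. The key substep is that each $\overline{U_\alpha}$ meets $T_{n-1}$ at exactly one point. Suppose for contradiction that $x \neq y$ are both in $\overline{U_\alpha} \cap T_{n-1}$. Since $\overline{U_\alpha}$ and $T_{n-1}$ are both subtrees, the whole arc $[x,y]$ lies in their intersection. Each of $x$ and $y$ is a limit of points of $U_\alpha$, so by a pigeonhole argument over the finitely many directions at each, there exist branches $B_x$ and $B_y$ of $T_n$ at $x$ and $y$, both transverse to $[x,y]$, that enter $U_\alpha$. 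Choosing $u \in B_x \cap U_\alpha$ near $x$ and $v \in B_y \cap U_\alpha$ near $y$, the unique tree arc $[u,v]_T$ passes through all of $[x,y]$ and hence enters $T_{n-1}$. But connected subsets of a dendrite contain the ambient arcs between their points (any missing interior point would disconnect the set by separating its endpoints), forcing $[u,v]_T \subseteq U_\alpha \subseteq T \setminus T_{n-1}$, a contradiction. A parallel argument shows that $\overline{U_\alpha} \cap \overline{U_\beta}$ for $\alpha \neq \beta$ can only consist of a common attachment point. Therefore each component $K_n^j$ of $\overline{T_n \setminus T_{n-1}}$ is a union of $\overline{U_\alpha}$'s linked at a single point of $T_{n-1}$, and $|K_n^j \cap T_{n-1}| = 1$; the ``moreover'' in (ii) then follows since there are only finitely many such components.

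Parts (iii)--(v) follow with modest effort. For (iii), same-$n$ disjointness is immediate from the component definition, while for $n < m$ the inclusion $K_n^j \subseteq T_n \subseteq T_{m-1}$ together with (ii) forces $K_n^j \cap K_m^{j'}$ into the single point $K_m^{j'} \cap T_{m-1}$. For (iv), a leaf $x$ of $K_n^j$ outside $T_{n-1}$ has a neighborhood in $T_n \setminus T_{n-1}$ that lies inside $K_n^j$, so $x$ is also a leaf of $T_n$; since $T_n$ is the convex hull of $N_n$, its leaves must lie in $N_n$, yielding $x \in N_n \setminus N_{n-1}$. Such a leaf exists because $K_n^j$ has at least two points (its attachment point plus points in $T_n \setminus T_{n-1}$) and hence a leaf distinct from its attachment point. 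The ``moreover'' uses that leaves of $T$ contained in $T_n$ remain leaves of $T_n$, whereas any attachment point has at least two branches in $T_n$ and so cannot be a leaf of $T_n$; combined with (iii), this shows that a leaf of $T$ in $K_n^j$ cannot belong to any other $K_m^i$. For (v), the finiteness assumption gives $T_n = T$ for large $n$, and $T_n = \bigcup_{k \leq n}\bigcup_{j \in J_k} K_k^j$ by construction. The main obstacle is the one-point-attachment substep in (ii), which requires the careful path-connectedness argument in the dendrite outlined above; once that structural fact is established, the remaining statements follow cleanly from uniqueness of arcs in trees.
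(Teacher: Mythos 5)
Your overall strategy is the same as the paper's: treat each $K_n^j$ as a union of closures of components of $T_n\setminus T_{n-1}$, show each such closure (hence each $K_n^j$) meets $T_{n-1}$ in a single point, and then deduce (iii)--(v) from uniqueness of arcs and the finiteness assumption. Parts (i), (iii), (v) and most of (iv) are argued essentially as in the paper; your route to the net point in (iv) (a nondegenerate subtree has a non-cut point besides the attachment point, and leaves of $T_n$ lie in $N_n$) is a mild variant of the paper's argument, and it works.

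The one step that is not justified as written is the pigeonhole claim in (ii): pigeonholing the points of $U_\alpha$ accumulating at $x$ over the finitely many branches of $T_n$ at $x$ gives you a branch containing points of $U_\alpha$ arbitrarily close to $x$, but nothing in that argument forces the branch to be \emph{transverse} to $[x,y]$ (it could a priori be the branch containing $(x,y]$, e.g.\ if $U_\alpha$ approached $x$ through points hanging off the interior of $[x,y]$), so the assertion that $[u,v]_T$ passes through \emph{all} of $[x,y]$ is not established. Fortunately transversality is not needed: fix an interior point $z$ of $[x,y]$; since $T_n$ is locally connected, the components of $T_n\setminus\{z\}$ are open, and they separate $x$ from $y$, so any $u\in U_\alpha$ sufficiently close to $x$ and $v\in U_\alpha$ sufficiently close to $y$ lie in different components of $T_n\setminus\{z\}$, whence $z\in[u,v]_T$. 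Combined with your (correct) observation that a connected subset of a dendrite contains the arc between any two of its points, so that $[u,v]_T\subseteq U_\alpha\subseteq T_n\setminus T_{n-1}$, this gives the contradiction directly. With that one-line repair, your proof of the key substep is sound and the remainder of the lemma follows as you describe.
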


\begin{proof}
For (i), we note that each $K_{n}^j$ is a compact connected subset of a metric tree, hence it is a metric tree itself. Moreover, each $K_{n}^j$ is 1-bounded turning as a subset of $T$, and $D$-doubling as a subset of a $D$-doubling space. Therefore, each $K_{n}^j$ is a quasiconformal tree.

For (ii), we start by noting that each component of $T_n \setminus T_{n-1}$ contains at least one point in $N_{n}\setminus N_{n-1}$. To see this, note that each $x\in T_n \setminus T_{n-1}$ is contained on an arc $[a,b]$ between two leaves, with $b\in N_n \setminus N_{n-1}$. If $a\in N_{n-1}$, then since $T_n$ is a tree, we must have $[x,b] \subset T_n\setminus T_{n-1}$. If $a \in N_{n}\setminus N_{n-1}$ then both arcs $[a,x]$ and $[x,b]$ can not intersect $T_{n-1}$. Either way the component containing $x$ will contain either $a$ or $b$ and that point lies in $N_n\setminus N_{n-1}$. 
 
Conversely, for any point $x\in N_{n}\setminus N_{n-1}$ there exists unique component of $T_{n} \setminus T_{n-1}$ containing $x$. Therefore, we can enumerate the components of $T_{n}\setminus T_{n-1}$ as $X_1,\dots,X_m$ for some $m\leq \card(N_{n}\setminus N_{n-1})$. Moreover, for every $i\in\{1,\dots,m\}$ we have that $\overline{X_i}$ is a metric tree  and $\overline{X_i}\cap T_{n-1}$ contains exactly one point. Now, for each $i\in J_n$ we have that $K_{n}^j = \overline{X_{i_1}}\cup\cdots \cup \overline{X_{i_j}}$ for some $i_1,\dots,i_j \in \{1,\dots,m\}$ with $\overline{X_{i_1}},\dots,\overline{X_{i_j}}$ all intersecting $T_{n-1}$ (and each other) at one single point.

Therefore, we showed that $\card(J_n) \leq \card(N_{n}\setminus N_{n-1})$ and that for any $j\in J_n$, $K^j_{n}$ intersects $T_{n-1}$ in exactly one point. It follows that $T_{n-1} \cap \overline{T_{n}\setminus T_{n-1}}$ contains at most $\card(N_{n}\setminus N_{n-1})$ many points.

For (iii), if $j,j' \in J_n$ with $j\neq j'$, then $K_n^{j} \cap K_n^{j'} = \emptyset$ since each $K_{n}^i$ is a distinct component of $\overline{T_{n+1} \setminus T_n}$. Fix now positive integers $n< m$ and $j\in J_n$ and $j'\in J_m$. Then 
\[ K_{n}^j \cap K_{m}^{j'} \subset K_{m}^{j'} \cap T_n \subset K_{m}^{j'} \cap T_{m-1}\]
where the latter intersection contains exactly one point by (ii).

For (iv), observe that 
\[ \mathcal{L}(K_n^j) \subset (K_n^j \cap T_{n-1}) \cup (\mathcal{L}(T_{n})\setminus T_{n-1}) = (K_n^j \cap T_{n-1}) \cup (N_n\setminus N_{n-1}).\]

By the proof of (ii), we know that $K_n^j$ contains a point of $N_n \setminus N_{n-1}$. Thus, $\mathcal{L}(K_n^j)$ also contains a point of $N_n \setminus N_{n-1}$.

For the ``moreover'' statement in (iv), fix a point $x \in  K_n^j \cap \mathcal{L}(T) \subseteq N_n \setminus N_{n-1}$. By (iii) we have that $x \not\in K_n^{j'}$ for any $j'\in J_n \setminus \{j\}$. If $m\neq n$ and $j'\in J_m$, then $K_m^{j'}\cap \mathcal{L}(T) \subseteq N_m \setminus N_{m-1}$ and therefore cannot contain $x$. This completes the proof of (iv).

For the last claim, we know that $N_n =\mathcal{L}(T)$ for all $n$ sufficiently large, and therefore $T_n = T$ for all $n$ sufficiently large. Given $x\in T$, choose $n\in\mathbb{N}$ to be the first index such that $x\in T_n \setminus T_{n-1}$ (viewing $T_0=\emptyset$). Then $x$ must be in some component of $\overline{T_n \setminus T_{n-1}}$, i.e., some $K_n^j$. 
\end{proof}

\begin{remark} \label{rmk:infinitecase} The simplification gained by assuming that $\mathcal{L}(T)$ is finite is most prominent in the proof of (v) in the previous lemma, which then makes Lemma \ref{lem:monotone} somewhat easier to state and prove.

If $\mathcal{L}(T)$ were infinite, then part (v) above would have to be modified to $T \setminus \mathcal{L}(T) \subset \bigcup_{n\geq 1} \bigcup_{j\in J_n} K_n^j$. To see this, consider $x\in T \setminus \mathcal{L}(T)$. Since $x\not\in \mathcal{L}(T)$, then $T\setminus \{x\}$ contains at least two components. Let $A$ be a component of $T\setminus\{x\}$ containing a point $a\in N_1$, and $B$ another component of $T\setminus\{x\}$.  The set $B$ contains at least one leaf of $T$ and is open in $T$; therefore, it contains a point $b\in N_n$ for some $n$ large. The arc $[a,b]$ contains $x$ and is contained in $T_n$. Therefore $x\in T_n$ for some $n$, and so $x\in K_m^j$ for some $m,j$. \end{remark}

\begin{lemma}\label{lem:components2}
Each tree $K^j_{n}$ has diameter at most $2^{2-n}$. Moreover, for each $n\in \mathbb{N}$ and $j\in J_n$, $K^j_n$ is a union of at most $C$ arcs, where $C$ depends only on $D$.
\end{lemma}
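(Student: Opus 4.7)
The plan is to derive both claims from bounds on the leaves of $K^j_n$, combined with the $1$-bounded turning of $T$.

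For the diameter: since $K^j_n$ is $1$-bounded turning by Lemma~\ref{lem:Kproperties}(i), $\diam(K^j_n) = \diam(\mathcal{L}(K^j_n))$. When $n=1$, $K^1_1 = T_1 \subseteq T$ gives $\diam(K^1_1) \leq 1 \leq 2^{2-1}$. When $n \geq 2$, let $p$ be the unique point of $K^j_n \cap T_{n-1}$ provided by Lemma~\ref{lem:Kproperties}(ii). We will show $d(\ell, p) < 2^{1-n}$ for every leaf $\ell$ of $K^j_n$, giving $\diam(K^j_n) < 2^{2-n}$. By Lemma~\ref{lem:Kproperties}(iv), either $\ell = p$ or $\ell \in N_n \setminus N_{n-1}$; in the latter case, there is $q \in N_{n-1}$ with $d(\ell, q) < 2^{-(n-1)}$ since $N_{n-1}$ is a $2^{-(n-1)}$-net in $\mathcal{L}(T)$.

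The key step is to verify $p \in [\ell, q]$. Both $T_{n-1}$ (which equals $\bigcup_{a \in N_{n-1}} [a_0, a]$ for any fixed $a_0 \in N_{n-1}$) and $K^j_n$ are path-connected, hence arc-closed subtrees of $T$. Thus $[\ell, p] \subseteq K^j_n$ and $[p, q] \subseteq T_{n-1}$. The median of $\{\ell, p, q\}$ in $T$ lies on both $[\ell, p]$ and $[p, q]$, hence in $K^j_n \cap T_{n-1} = \{p\}$, so the median equals $p$ and therefore $p \in [\ell, q]$. Consequently $[\ell, p] \subseteq [\ell, q]$, and the $1$-bounded turning of $T$ yields $d(\ell, p) \leq \diam([\ell, q]) \leq d(\ell, q) < 2^{1-n}$.

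For the arc count, the preceding argument places all leaves of $K^j_n$ in $\{p\} \cup (N_n \cap \overline{B(p, 2^{1-n})})$. Since $N_n$ is $2^{-n}$-separated and $T$ is $D$-doubling, two iterations of the doubling condition cover $\overline{B(p, 2^{1-n})}$ by $D^2$ balls of radius $2^{-n-1}$, each containing at most one point of $N_n$. Hence $K^j_n$ has at most $D^2 + 1$ leaves (for $n = 1$ the same counting gives $|N_1| \leq D^2$). Since $K^j_n$ is a subtree of the finite tree $T_n$, taking arcs from one fixed leaf to each other leaf exhibits $K^j_n$ as a union of at most $D^2$ arcs, so $C = D^2$ works. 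The only subtle step is the tree-topological claim $p \in [\ell, q]$, which relies on $T_{n-1}$ being arc-closed in $T$; the remainder is routine counting with the net and doubling properties.
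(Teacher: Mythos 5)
Your proof is correct and follows essentially the same route as the paper's: both identify the root point $p=K_n^j\cap T_{n-1}$, use the fact that an arc from a leaf $\ell\in N_n\setminus N_{n-1}$ to a nearby point of the net $N_{n-1}$ must pass through $p$ (you make this explicit via the median, which the paper leaves implicit) to get $d(\ell,p)<2^{1-n}$, and then count leaves using the $2^{-n}$-separation of $N_n$ together with doubling to write $K_n^j$ as a bounded union of arcs. The only nit is quantitative: a ball of radius $2^{-n-1}$ has diameter up to $2^{-n}$ and so could contain two points of a $2^{-n}$-separated set, so your ``at most one point per ball'' needs one more doubling iteration (e.g.\ radius $2^{-n-2}$, giving $D^3$ rather than $D^2$) --- harmless, since the lemma only asserts a constant depending on $D$.
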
 

\begin{proof}
For $n=1$, the first claim of the lemma is clear. For the second claim we have $\card(\mathcal{L}(K_1^1)) = \card(N_1)$, so $K^1_1$ is a union of at most $\card(N_1)$ arcs. Note also that $\card(N_1) \leq C$ for some $C$ depending only on $D$. 

Assume for the rest that $n\geq 2$. Fix $j\in J_n$ and let $x$ be the unique point in $T_{n-1} \cap K_n^j$.

For the first claim, let $[a,b]\subseteq K^j_{n}$ with $a\neq b$. The arc $[a,b]$ extends to an arc $[a',b']$ with $a',b'\in \mathcal{L}(K^j_{n})$. We consider two cases.

\emph{Case 1.} Suppose that one of the points $a',b'$, say $b'$, is the point $x$; then $a'\in N_n \setminus N_{n-1}$ by Lemma \ref{lem:Kproperties}. Choose $z\in N_{n-1}\subseteq T_{n-1}$ such that $d(z,a')< 2^{1-n}$. Then,
\[ \diam{[a,b]} \leq \diam{[a',x]} \leq \diam{[a',z]} = d(a',z) < 2^{1-n}.\]

\emph{Case 2.} Suppose that $a',b' \in N_{n}$. By Case 1 and the triangle inequality,
\[ \diam{[a,b]} \leq \diam{[a',b']} = d(a',b') \leq d(a',x) + d(b',x) < 2^{2-n}.\]
It follows that the diameter of any arc in $K_n^j$ is at most $2^{2-n}$.

We now show the second claim. By the first claim, we have that $(N_n\setminus N_{n-1})\cap K_n^j \subseteq \overline{B}(x, 2^{2-n})$. Since $N_n \setminus N_{n-1}$ is a $2^{-n}$-separated set, there exists $C>1$ depending only on $D$ such that
\[ \card{ \mathcal{L}(K_{n}^j)} \leq 1+ \card{((N_n\setminus N_{n-1})\cap K_n^j)} \leq C+1.\]
Therefore, $K^{j}_n$ is the union of at most $C$ arcs. 
\end{proof}

Set $r^1_1\in N_1 \subseteq K^1_1$ to be an arbitrary root for $K^1_1=T_1$, and let 
$$ r^j_n = \text{ the unique element of } K_n^j \cap T_{n-1},$$
(as provided by Lemma \ref{lem:Kproperties}) for each $n\geq 1$ and $j\in J_n$.

We now form a graph that encodes how close the sets $K^n_j$ are to each other. Given a constant $S\geq 1$, consider the graph $G_S=(V,E_S)$ with vertex set
$$ V = \{(n,i) : n\geq 1, i\in J_n\}$$
and edge set
$$ E_S = \{ \{(n,i), (m,j)\} : n\geq m \text{ and } \dist(K_n^i, K_m^j) \leq S2^{-\max\{n,m\}}\},$$
If $v,w\in V$, we use the notation $v\sim w$ to indicate that there is an edge between $v$ to $w$ in $E_S$, with $S$ understood from context.

We remark that the valence of the graph $G_S$ (the maximum of the valences of its vertices) may be arbitrarily large. However, we show in the next lemma that it is possible to color the vertices of $G_S$ with a fixed number of colors so that adjacent vertices have different colors.

\begin{lemma}\label{lem:coloring}
For each $S\geq 1$, there is a positive integer $A=A(S,D)$ and a ``coloring'' $\chi \colon V \rightarrow \{1, \dots, A\}$ such that if $v,w\in V$ and $v\sim w$, then $\chi(v) \neq \chi(w)$.
\end{lemma}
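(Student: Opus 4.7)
The plan is to show that $G_S$ has bounded degeneracy under the ordering on $V$ that processes levels in increasing order (ties broken arbitrarily): I will show each vertex $v = (n,i)$ has at most $A = A(S, D)$ back-neighbors, so the standard greedy algorithm yields a proper coloring with $A+1$ colors.

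The same-level back-neighbors ($m = n$) are straightforward: by Lemma~\ref{lem:components2}, each $K_n^j$ adjacent to $K_n^i$ has a leaf in the $2^{-n}$-separated set $N_n \setminus N_{n-1}$ within distance $O(S)\cdot 2^{-n}$ of $K_n^i$, so doubling bounds their number by some $A_0 = A_0(S,D)$.

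The main work is for lower-level back-neighbors ($m < n$). By Lemma~\ref{lem:components2}, $K_n^i$ is a union of at most $C_0 = C_0(D)$ arcs, hence has at most $C_0$ leaves, and at most $C_0$ ``exit points'' through which any arc in $T$ can leave $K_n^i$. For $(m,j) \sim v$ with $m < n$, picking $x \in K_n^i, y \in K_m^j$ with $d(x,y) \leq S 2^{-n}$, the arc $[x,y]$ in $T$ has diameter $\leq S 2^{-n}$ by $1$-bounded turning and exits $K_n^i$ at some point $b$; this associates $(m,j)$ with $b$. To bound, for each fixed $b$, the number of $(m,j)$'s associated with $b$, I will form the parent-child tree $\mathcal{T}$ on $V$, where $(n+1,j)$ is a child of $(n,i)$ iff $r_{n+1}^j \in K_n^i$; by doubling applied to the $2^{-(n+1)}$-separated leaves in $N_{n+1}\setminus N_n$ lying in an $O(2^{-n})$-neighborhood of $K_n^i$, the tree $\mathcal{T}$ has branching $\leq C_1 = C_1(D)$. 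A telescoping sum of diameters shows that the $k$-th ancestor of $v$ has $K$-set at distance $O(2^{k-n})$ from $b$, which is $\leq S 2^{-n}$ only when $k \leq \log_2 S + O(1)$, giving only $O(\log S)$ qualifying ancestors. For a non-ancestor $(m,j)$ associated with $b$, a similar arc-length bookkeeping forces the least common ancestor of $v$ and $(m,j)$ in $\mathcal{T}$ to lie within $\log_2 S + O(1)$ levels of $n$, and bounded branching of $\mathcal{T}$ then bounds the number of such descendants of a near LCA by $C_1^{O(\log S)} =: A_1 = A_1(S,D)$.

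Combining, the back-degree of $v$ is $\leq A_0 + C_0 \cdot A_1 =: A(S,D)$, and the greedy argument yields the claimed coloring. The hardest step will be the non-ancestor bound: one must carefully combine the $1$-bounded turning (to convert the distance bound $\leq S 2^{-n}$ into a diameter bound on the arc connecting $K_n^i$ to $K_m^j$, and thus into a bound on how far this arc reaches up the tree) with the bounded branching of $\mathcal{T}$ (itself a consequence of doubling of $T$) to show that a non-ancestor $K_m^j$ close to $K_n^i$ must in fact descend from a common ancestor of $v$ at a level close to $n$.
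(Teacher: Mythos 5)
Your overall framing---bound the back-degree of each vertex in the level ordering and then color greedily---is exactly the paper's strategy, and your same-level count via the $2^{-n}$-separated points of $N_n\setminus N_{n-1}$ is fine. The gap is in the lower-level count, which is the heart of the lemma. The tree $\mathcal{T}$ you propose does not exist as described: the root $r_{n+1}^j$ lies in $T_n$, but in general it lies in $T_{n-1}$ (for instance in the interior of $K_1^1=T_1$), and a point of $T_{n-1}$ belongs to a level-$n$ piece only if it is one of the finitely many attachment points $r_n^i$; so most vertices of $V$ have no parent under your rule ``$(n+1,j)$ is a child of $(n,i)$ iff $r_{n+1}^j\in K_n^i$,'' and ancestors/LCAs are undefined for them. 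The natural repair---let the parent of a vertex be the piece containing its root, at whatever lower level---destroys the branching bound: $K_1^1$ has diameter comparable to $1$ and acquires children at every level, unboundedly many in total, while your doubling argument only controls children one level down (where $\diam K_n^i\lesssim 2^{-n}$ is available from Lemma \ref{lem:components2}). Moreover, the key claim for non-ancestors---that $\dist(K_m^j,K_n^i)\le S2^{-n}$ forces the LCA of $(m,j)$ and $(n,i)$ to lie within $\log_2 S+O(1)$ levels of $n$---is false: $K_n^i$ and a nearby piece $K_m^j$ can both be attached directly to $K_1^1$, so their only common ancestor is $(1,1)$ at level $1$, and the concluding count $C_1^{O(\log S)}$ has no support. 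Thus the real difficulty, namely that pieces from many different lower levels can accumulate near $K_n^i$, is not controlled by your argument. (The issue is not quantitative: a bound $A(S,D)$ polynomial in $S$ would be perfectly acceptable.)

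For comparison, the paper bounds the number of earlier neighbors of $v_0=(n_0,i_0)$ by splitting them according to diameter. If $\diam K_m^j\le 10S2^{-n_0}$, then $K_m^j$ contains a point of $N_m\subseteq N_{n_0}$ (nestedness of the nets is the crucial point), and these points are $2^{-n_0}$-separated and lie within $(1+11S)2^{-n_0}$ of $r_{n_0}^{i_0}$, so doubling bounds their number. If $\diam K_m^j>10S2^{-n_0}$, one picks a point $q_m^j\in K_m^j$ within $10S2^{-n_0}$ of $K_{n_0}^{i_0}$ but at distance at least $2^{-n_0}/10$ from the root $r_m^j$; since any arc joining two such points in different pieces must pass through one of the two roots (by Lemma \ref{lem:Kproperties}(ii)), $1$-bounded turning makes the points $q_m^j$ pairwise $2^{-n_0}/10$-separated, and doubling bounds their number as well. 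Your observation that every arc from $K_n^i$ to a lower-level piece must exit through the single point $r_n^i$ is correct and would simplify your exit-point bookkeeping, but by itself it does not bound how many pieces from distinct lower levels approach that point; some separation argument of the above two-case type is still needed, and your proposal does not supply one.
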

\begin{proof}
Given $S$, let $A=2A'+1$, where $A'=A'(S,D)$ is the maximum number of $r$-separated points in any ball of radius $200Sr$ in $T$. This is finite and depends only on $S$ and the doubling constant $D$. We now construct $\chi$ inductively. Order $V$ so that $(n,i)<(m,j)$ if and only if $n<m$ or $n=m$ and $i<j$. Set $\chi((1,1))=1$.

Suppose we have defined $\chi$ for all $v<v_0 = (n_0, i_0)$. We will show that the collection of all $v=(m,j)\in V$ such that $v<v_0$ and $v_0 \sim v$ has strictly fewer than $A$ elements. In that case, we may color $v_0$ by a color which does not match any of these, and the proof is complete.

Consider first the collection of all $v=(m,j)\in V$ such that $v<v_0$, $v_0 \sim v$ and $\diam(K_m^j)\leq 10S2^{-n}$. By Lemma \ref{lem:Kproperties}, each such $K_m^j$ contains a distinct point $n_m^j\in N_m\subseteq N_n$. Each such $n_m^j$ satisfies
$$ d(n_m^j, r_{n_0}^{i_0}) \leq \diam(K_{n_0}^{i_0}) + S2^{-n}+ \diam(K_m^j) \leq (1+11S)2^{-n}.$$
Since these points are $2^{-n}$-separated, there can be at most $A'<\frac{A}{2}$ such points, and hence less than $A/2$ such $K_m^j$.

Next we bound the total number of $v=(m,j)\in V$ such that $v<v_0$, $v_0 \sim v$ and $\diam(K_m^j)> 10S2^{-n}$. Consider such a $K_m^j$. Each such $K_m^j$ contains a point $q_m^j$ such that $\dist(q_m^j, K_{n_0}^{i_0})< 10S2^{-n}$ and $q_m^j$ is at least distance $\frac{1}{10} 2^{-n}$ from $r_m^j$. 

If $(m,j)\neq (m',j')$ for two such vertices in our graph, then $d(q_m^j, q_{m'}^{j'}) \geq \frac{1}{10}2^{-n}$: any path from $q_m^j$ to $q_{m'}^{j'}$ must pass through either $r_m^j$ or $r_{m'}^{j'}$, and so the bounded turning condition and the second defining property of $q_m^j$ yield this bound.

Thus, the collection of all such points $q_m^j$ forms a $\frac{1}{10}2^{-n}$-separated set inside $B(r_{n_0}^{i_0}, (1+10S)2^{-n})$. It follows that the collection of such $K_m^j$ has again at most $A'<A/2$ elements.

Thus, the total number of $v=(m,j)\in V$ such that $v<v_0$ and $v_0 \sim v$ is strictly less than $A$. This completes the proof.
\end{proof}

A simple consequence of the previous lemma is the following bound on how many sets $K_m^j$, with $m$ small, can intersect an arc.

\begin{lemma}\label{lem:arcintersection}
There is a constant $M=M(D)$ such that if $d(x,y)\leq 2^{-n}$, then $[x,y]$ can intersect at most $M$ distinct sets $K^m_j$ with $m\leq n$. 
\end{lemma}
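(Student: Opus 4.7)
The plan is to reduce the statement directly to the coloring lemma (Lemma~\ref{lem:coloring}) applied with $S=1$. Suppose $[x,y]$ meets the distinct pieces $K_{m_1}^{j_1}, \dots, K_{m_k}^{j_k}$ with $m_i\le n$ for each $i$. I claim that the corresponding vertices form a clique in the graph $G_1$; once this is established, the existence of a proper coloring with $A(1,D)$ colors forces $k\le A(1,D)$, and we can simply set $M:=A(1,D)$.

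To verify the clique condition, I would first use the $1$-bounded turning assumption on $T$ (standing convention of Section~\ref{sec:decomposition}) to observe that $\diam([x,y])=d(x,y)\le 2^{-n}$. Consequently, any two of the pieces $K_{m_a}^{j_a}, K_{m_b}^{j_b}$ meet a common set of diameter $\le 2^{-n}$ and therefore satisfy $\dist(K_{m_a}^{j_a}, K_{m_b}^{j_b})\le 2^{-n}$. Taking without loss of generality $m_a\ge m_b$, the constraint $m_a\le n$ gives $2^{-\max(m_a,m_b)}=2^{-m_a}\ge 2^{-n}$, so
$$\dist(K_{m_a}^{j_a}, K_{m_b}^{j_b})\le 2^{-n}\le 2^{-\max(m_a,m_b)} = 1\cdot 2^{-\max(m_a,m_b)}.$$
This is exactly the adjacency condition defining $E_S$ with $S=1$, so $(m_a,j_a)\sim(m_b,j_b)$ in $G_1$.

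Applying Lemma~\ref{lem:coloring} to $G_1$ produces a proper vertex coloring using $A(1,D)$ colors. Since all vertices of a clique receive pairwise distinct colors, we obtain $k\le A(1,D)$, and the proof is complete.

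There is no substantial obstacle: the bulk of the work has already been done in constructing $G_S$ and proving Lemma~\ref{lem:coloring}. The only conceptual point is recognizing that the scale constraint $m\le n$ is precisely what converts the trivial diameter bound $\diam([x,y])\le 2^{-n}$ into adjacency at the coarsest scale parameter $S=1$; once noted, cliques in a boundedly colored graph are boundedly small.
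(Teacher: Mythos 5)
Your proposal is correct and follows essentially the same route as the paper: take $S=1$, note that any two pieces meeting $[x,y]$ with scales at most $n$ satisfy $\dist \leq \diam([x,y]) \leq 2^{-n} \leq 2^{-\max\{m_a,m_b\}}$ and hence are adjacent in $G_1$, then invoke the proper coloring from Lemma \ref{lem:coloring} to bound the number of such pieces by $A(1,D)$.
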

\begin{proof}
Let $S=1$ and apply Lemma \ref{lem:coloring}.

Notice that if $K_{\ell}^j$ and $K_m^i$ intersect $[x,y]$ and have $\ell, m\leq n$, then  
$$\dist(K_{\ell}^j, K_m^i) \leq \diam([x,y]) \leq 2^{-n} \leq S2^{-\max\{\ell,m\}},$$
and so there is an edge $(\ell,j)\sim (m,i)$ in the graph $G_S$.

Thus, if $K_{\ell}^j$ and $K_m^i$ intersect $[x,y]$ and have $\ell, m\leq n$, then they are adjacent in $G_S$. Lemma \ref{lem:coloring} therefore says that the number of such sets is bounded by a constant depending only on $D$, which completes the proof. 
\end{proof}

Next, we examine more closely the way in which an arc $\gamma$ in $T$ can be covered by the sets $K_m^j$. Let us say that $\gamma$ \textit{traverses} $K_m^j$ if $\gamma\cap K_m^j$ contains more than one point (in which case $\gamma\cap K_m^j$ is a sub-arc of $\gamma$).

Each point $x\in \gamma$ must be contained in a set $K_m^j$ that $\gamma$ traverses. Indeed, let $\{x_i\}$ be a sequence of points in $\gamma$, all distinct from $x$, that converges to $x$. By our finiteness assumption, there are only finitely many sets $K_m^j$, and by Lemma \ref{lem:Kproperties}(v), each $x_i$ is in one of them. Therefore, a subsequence of $\{x_i\}$ is contained in a fixed $K_m^j$, which must also contain $x$ by compactness and be traversed by $\gamma$.  

In addition, note that if $\gamma$ traverses both $K_m^j$ and $K_n^i$, then $\gamma$ must intersect one of them ``first'' in the order of parametrization of the arc.

\begin{lemma}\label{lem:monotone}
Let $\gamma$ be an arc in $T$. Let $\{K_{m(i)}^{j(i)}\}_{i\in I}$ be the collection of sets $K_m^j$ that are traversed by $\gamma$, where the index set $I$ is a finite set $\{1, \dots, n\}$.  Order the sets in the order along which they intersect $\gamma$.

Then there is an index $i_0\in I$ such that $m(i) > m(i+1)$ for all $i<i_0$ and $m(i) < m(i+1)$ for all $i\geq i_0$. 
\end{lemma}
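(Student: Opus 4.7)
The plan is to establish the V-shape of the sequence $m(1), m(2), \ldots, m(n)$ by ruling out any \emph{peak}, i.e., an intermediate index $i$ with $m(i-1) < m(i) > m(i+1)$. Once peaks are forbidden, the claim follows immediately: letting $i_0$ be the smallest index with $m(i_0) < m(i_0+1)$ (or $i_0 = n$ if no such index exists), strict decrease for $i < i_0$ is automatic from the minimality of $i_0$ together with the fact that $m(i) \neq m(i+1)$, and strict increase for $i \geq i_0$ is forced because any subsequent descent would combine with the preceding ascent into a peak.

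For the geometric setup, each $K_{m(i)}^{j(i)}$ is a metric subtree of $T$ by Lemma \ref{lem:Kproperties}(i) and hence contains the $T$-arc between any two of its points; since $\gamma$ is itself an arc, the intersection $\gamma \cap K_{m(i)}^{j(i)}$ is then a connected subset of $\gamma$, i.e., a subarc. I fix a parametrization $\gamma\colon [0,1] \to T$ and write $\gamma \cap K_{m(i)}^{j(i)} = \gamma([a_i, b_i])$ with $a_i < b_i$ strictly (since $\gamma$ traverses the set); ordering the indices by $a_i$ recovers the order in the statement. Because every point of $\gamma$ lies in some traversed $K_m^j$ (as observed just before the lemma), the intervals $[a_i, b_i]$ cover $[0,1]$, forcing $a_{i+1} \leq b_i$. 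Since distinct sets $K_m^j$ meet in at most one point by Lemma \ref{lem:Kproperties}(iii), we must in fact have $a_{i+1} = b_i$, and this common parameter corresponds to a single meeting point $p_i := \gamma(b_i) \in K_{m(i)}^{j(i)} \cap K_{m(i+1)}^{j(i+1)}$. In particular $m(i) \neq m(i+1)$, since distinct sets at the same level are disjoint but $p_i$ exists.

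The key step is to identify $p_i$ using the nested structure $T_1 \subseteq T_2 \subseteq \cdots$. If $m(i) < m(i+1)$, then $p_i \in K_{m(i)}^{j(i)} \subseteq T_{m(i)} \subseteq T_{m(i+1)-1}$, so by Lemma \ref{lem:Kproperties}(ii), $p_i$ is forced to equal the unique point $r_{m(i+1)}^{j(i+1)}$; symmetrically, if $m(i) > m(i+1)$, then $p_i = r_{m(i)}^{j(i)}$. With this in hand, the contradiction at a putative peak is immediate: if $m(i-1) < m(i) > m(i+1)$, then $p_{i-1} = r_{m(i)}^{j(i)} = p_i$, so $\gamma(a_i) = \gamma(b_i)$, forcing $a_i = b_i$ and contradicting the assumption that $\gamma$ traverses $K_{m(i)}^{j(i)}$.

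The main obstacle, modest as it is, is the identification $p_i = r_{m(i)}^{j(i)}$ at the higher of two adjacent levels, since this is precisely what forbids peaks. Everything else is routine tree-theoretic bookkeeping: the fact that a subtree of a tree intersects an arc in a subarc (via uniqueness of arcs in a tree), coverage of $\gamma$ by the traversed sets to prevent gaps in the parametrization, and the one-point intersection rule from Lemma \ref{lem:Kproperties}(iii) to pin down each overlap to a single point. I anticipate no significant technical difficulty beyond carefully stringing these facts together.
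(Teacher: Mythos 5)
Your proof is correct and rests on the same key fact as the paper's: at a set whose two neighbors along the arc both have smaller level, Lemma \ref{lem:Kproperties}(ii) forces both transition points to coincide with the single point of $K_n^j\cap T_{n-1}$, contradicting traversal. The paper packages this as an induction outward from a global minimizer of $m$ rather than as your ``no local peak'' argument, but this is only a cosmetic reorganization of the same approach.
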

\begin{proof}
Choose $i_0\in I$ such that $m(i_0)\leq m(i)$ for all $i\in I$. 

We show by induction that for any $i< i_0$ we have $m(i+1) < m(i)$; the case $i> i_0$ is similar. First, we have that $m(i_0-1) \geq m(i_0)$. By (iii) of Lemma \ref{lem:Kproperties}, we have that $m(i_0-1) \neq m(i_0)$, so  $m(i_0-1) > m(i_0)$. Suppose now that
\[ m(i_0-l) < m(i_0-l+1) < \cdots < m(i_0).\]
Let $p$ be the unique point in $\gamma \cap K_{m(i_0-l)}^{j(i_0-l)} \cap K_{m(i_0-l+1)}^{j(i_0-l+1)}$ and let $q$ be the unique point in $\gamma \cap K_{m(i_0-l-1)}^{j(i_0-l-1)} \cap K_{m(i_0-l)}^{j(i_0-l)}$. Since $\gamma$ traverses $K_{m(i_0-l)}^{j(i_0-l)}$, we have that $p\neq q$. Moreover, by (ii) of Lemma \ref{lem:Kproperties} we have that $K_{m(i_0-l)}^{j(i_0-l)} \cap T_{m(i_0-l+1)} = \{p\}$. Therefore, $m(i_0-l-1) \geq m(i_0-l)$ and by (iii) of Lemma \ref{lem:Kproperties}, we have that $m(i_0-l-1) > m(i_0-l)$, and so the inductive step is complete.
\end{proof}

\section{Construction of the embedding and proof of Theorem \ref{thm:main}}\label{sec:embedding} 

Let $T$ be a quasiconformal tree. In this section, we prove Theorem \ref{thm:main} by showing that $T$ admits a bi-Lipschitz embedding into some Euclidean space, with distortion and dimension depending only on the doubling and bounded turning constants of $T$.

As a first simplification, we may modify $T$ by a bi-Lipschitz deformation (whose distortion depends only on the bounded turning constant) so that it is bounded turning with constant $1$ (See \cite[Lemma 2.5]{BM1}.)  Because $T$ is $1$-bounded turning, we have $\diam(T)=\diam(\mathcal{L}(T))$. We may also rescale so that $\diam(T)=\diam(\mathcal{L}(T))=1$. Let $D$ be the doubling constant of $T$, after these modifications. Throughout this section, in all statements and proofs we make these assumptions.

Finally, we assume without loss of generality that the set $\mathcal{L}(T)$ of leaves of $T$ is finite. This assumption is justified by the following basic fact in metric embeddings: A compact metric space $X$ admits an $(a,b)$-bi-Lipschitz embedding into $\RR^k$ if and only if every finite subset of $X$ admits an $(a,b)$-bi-Lipschitz embedding into $\RR^k$. (See, e.g., \cite[equation (1)]{NN} or \cite[Lemma 4.9]{Assouad}.) Thus, if we prove Theorem \ref{thm:main} only for trees with finitely many leaves and we wish to embed an arbitrary quasiconformal tree $T$, we may apply the theorem to the convex hull of every finite subset of $T$ (which are all uniformly doubling, bounded turning trees), and conclude that Theorem \ref{thm:main} holds for $T$ by the ``basic fact'' mentioned above.

To prove Theorem \ref{thm:main}, it thus suffices to embed a $D$-doubling, $1$-bounded-turning tree $T$ with finite leaf set into some $\RR^k$, with $k$ and the distortion depending only on $D$. These assumptions on $T$ are now in force. We first apply the construction of the previous section to obtain sets $K_n^j$ satisfying the properties of Section \ref{sec:decomposition}.

Our first step is to construct ``local'' bi-Lipschitz embeddings on each piece $K_n^j$.
\begin{lemma}\label{lem:localembedding}
There are constants $L=L(D)\geq 1$ and $d=d(D)\in\mathbb{N}$ such that, for each $n\geq 1$ and $j\in J_n$, there is a $(1,L)$-bi-Lipschitz embedding
$$f_n^j\colon K_n^j \rightarrow \RR^d$$
such that $f_n^j(r_n^j)=0$.

Moreover, this embedding extends to an L-Lipschitz map $f_n^j\colon T \rightarrow \RR^d$ that is constant on each component of $T\setminus K_{n}^j$. In particular, $f_n^j$ is constant on $K_m^i$ if $(m,i)\neq (n,j)$.
\end{lemma}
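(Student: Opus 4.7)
The plan is to write $K_n^j$ as a union of at most $C = C(D)$ arcs using Lemma \ref{lem:components2}, bi-Lipschitz embed each arc individually into $\RR^{d_0}$ (possible since each arc, being $1$-bounded turning and $D$-doubling as a subset of $T$, is a quasiarc, and quasiarcs bi-Lipschitz embed in Euclidean space by \cite[Proposition 8.1]{DV}, with $d_0$ and the distortion depending only on $D$), and then glue these embeddings together along orthogonal coordinate blocks. The extension to $T$ will be a standard ``gate'' construction on the metric tree.

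For the gluing, order the arcs $\alpha_1, \ldots, \alpha_k$ ($k \leq C$) so that each $\alpha_i$ with $i \geq 2$ meets $\alpha_1 \cup \cdots \cup \alpha_{i-1}$ at a single attachment point $p_i$; this is possible since $K_n^j$ is a tree. Inductively, given a bi-Lipschitz embedding $F_{i-1}\colon \alpha_1 \cup \cdots \cup \alpha_{i-1} \to \RR^{(i-1)d_0}$ and a bi-Lipschitz embedding $f_i\colon \alpha_i \to \RR^{d_0}$ with $f_i(p_i) = 0$ (by translation), define
\[
F_i(x) = \begin{cases} (F_{i-1}(x), 0) & \text{if } x \in \alpha_1 \cup \cdots \cup \alpha_{i-1}, \\ (F_{i-1}(p_i), f_i(x)) & \text{if } x \in \alpha_i. \end{cases}
\]
The bi-Lipschitz estimate for two points in the same piece is immediate. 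For $x \in \alpha_1 \cup \cdots \cup \alpha_{i-1}$ and $y \in \alpha_i$, the arc $[x, y]$ passes through $p_i$, and $1$-bounded turning yields $d(x, p_i), d(y, p_i) \leq d(x, y) \leq d(x, p_i) + d(y, p_i)$. Combining this with the orthogonal block structure,
\[
|F_i(x) - F_i(y)|^2 = |F_{i-1}(x) - F_{i-1}(p_i)|^2 + |f_i(y)|^2
\]
is comparable to $d(x, p_i)^2 + d(y, p_i)^2$, which is in turn comparable to $d(x, y)^2$. The bi-Lipschitz constants deteriorate by a bounded absolute factor at each stage, and since there are at most $C = C(D)$ stages, the final constants depend only on $D$. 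Translating so that $r_n^j \mapsto 0$ and rescaling so that the lower bi-Lipschitz constant equals $1$ gives the desired $(1, L)$-bi-Lipschitz embedding $f_n^j\colon K_n^j \to \RR^d$ with $d = kd_0 \leq C d_0$.

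To extend $f_n^j$ to $T$, use the standard fact that in a metric tree each component $U$ of $T \setminus K_n^j$ has a unique ``gate'' $g_U \in K_n^j$ through which every arc from $U$ to $K_n^j$ must pass; set $f_n^j(x) = f_n^j(g_U)$ for $x \in U$. For any $x, y \in T$, the arc $[x, y]$ meets $K_n^j$ in a (possibly empty or degenerate) subarc with endpoints $a, b$ satisfying $f_n^j(x) = f_n^j(a)$ and $f_n^j(y) = f_n^j(b)$; by $1$-bounded turning $d(a, b) \leq \diam([x, y]) = d(x, y)$, so $f_n^j$ is $L$-Lipschitz on $T$. For the final claim, if $(m, i) \neq (n, j)$ then Lemma \ref{lem:Kproperties}(iii) says $K_m^i \cap K_n^j$ is empty or a single point $p$, so every point of $K_m^i$ either equals $p$ or lies in a component of $T \setminus K_n^j$ with gate $p$; hence $f_n^j$ is constant on $K_m^i$.

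The main technical point is the cross-piece bi-Lipschitz estimate in the gluing step, where $1$-bounded turning is essential to bound distances between points in different arcs by distances to the common attachment point; tracking how constants accumulate through the at most $C = C(D)$ stages is then routine bookkeeping.
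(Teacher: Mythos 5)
Your proposal is correct and, for the most part, follows the paper's own route: the same decomposition of $K_n^j$ into at most $C(D)$ quasiarcs via Lemma \ref{lem:components2}, the same appeal to \cite[Proposition 8.1]{DV} to embed each arc, and essentially the same gate/locally-constant extension to $T$, with the same bounded-turning Lipschitz estimate via the first and last points of $[x,y]\cap K_n^j$ and the same argument for constancy on $K_m^i$. The one genuinely different step is the gluing: the paper simply invokes \cite[Theorem 8.2]{LP} to pass from embeddings of the individual arcs to an embedding of $K_n^j$, whereas you glue by hand, placing successive arcs in orthogonal coordinate blocks and using $1$-bounded turning through the attachment point for the cross-piece estimate. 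Your version is more self-contained and makes the dependence of the constants on the number of arcs explicit (a loss of a factor at most $2$ per stage, so $L=L(D)$ after at most $C(D)$ stages), at the cost of a possibly larger ambient dimension; the citation-based route outsources exactly this bookkeeping. One point to tighten: an arbitrary covering family of arcs cannot in general be ordered so that each arc meets the union of its predecessors in a single point (arcs emanating from a common leaf overlap along subarcs), so you must choose the decomposition to have this property. For instance, enumerate the leaves $b_1,\dots,b_k$ of $K_n^j$, set $\alpha_1=[b_1,b_2]$ and, for $i\geq 2$, $\alpha_i=[b_{i+1},q_i]$ where $q_i$ is the first point of $[b_{i+1},b_1]$ lying in $\alpha_1\cup\dots\cup\alpha_{i-1}$; then each $\alpha_i$ attaches at the single point $q_i$, the union is all of $K_n^j$, and the number of arcs is still at most $\card \mathcal{L}(K_n^j)\leq C(D)+1$, so your induction runs verbatim. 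Finally, take the target dimension uniformly to be $d=Cd_0$ (padding with zeros) so that $d$ does not depend on $(n,j)$.
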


\begin{proof}
Fix any $K_n^j$. The set $K_n^j$ is a union of at most $C=C(D)$ arcs, by Lemma \ref{lem:components2}. Each such arc is $D$-doubling and $1$-bounded turning, and therefore admits a bi-Lipschitz embedding into $\RR^{d_0}$, $d_0=d_0(D)$ and bi-Lipschitz constants depending only on $D$. (This is \cite[Proposition 8.1]{DV}.) By \cite[Theorem 8.2]{LP}, we see that $K_n^j$ then admits an bi-Lipschitz embedding $f_n^j$ into some $\RR^{d}$, for $d=d(D)$ and constants dpending on $D$ and $C=C(D)$. By rescaling and translating this embedding, we can ensure that it is $(1,L)$ bi-Lipschitz for $L=L(D)$ and maps $r_n^j$ to the origin.

To construct the Lipschitz extension, let $K$ be a component of $T\setminus K_n^j$. Then $\overline{K}\cap K_n^j$ consists of exactly one point, $p$. Indeed, this intersection contains at least one point as $T$ is connected. Suppose it contained distinct points $p\neq q$, in which case it would also contain the arc $[p,q]$. Let $p',q'$ be a points of $K$ with distance at most $d(p,q)/10$ from $p,q$, respectively. Then $[p',q'] \cup [q',q] \cup [q,p]$ would contain an arc joining $p$ and $p'$ of diameter at least $d(p,q)/2 > d(p,p')$, contradicting the bounded turning property.

We thus set $f_n^j(x) = f_n^j(p)$ for all $x\in K$. This extends $f_n^j$ to all of $T$ in a way that it constant on each component of $T\setminus K_n^j$.

To prove that the extension is $L$-Lipschitz, consider $x, y\in T$. If $[x,y]$ is disjoint from $K_n^j$, then $x$ and $y$ lie in the same component of $T\setminus K_n^j$ and thus $|f(x)-f(y)|=0$. Otherwise, let $p$ and $q$ be the first and last points, respectively, on $[x,y]\cap K_n^j$. Then either $p=x$ or $p$ is the unique point in both $K_n^j$ and in the component of $T\setminus K_n^j$ containing $x$; in either case, we have $f_n^j(x)=f_n^j(p)$. Similarly, $f_n^j(y)=f_n^j(q)$. Therefore,
$$ |f_n^j(x)-f_n^j(y)|=|f_n^j(p)-f_n^j(q)| \leq L d(p,q) \leq L\diam([p,q]) \leq L\diam([x,y]) \leq Ld(x,y),$$
using the known fact that $f_n^j$ is $L$-Lipschitz on $K_n^j$ itself.

For the final, ``in particular'' statement, suppose $(n,j)\neq (m,i)$. By Lemma \ref{lem:Kproperties}, $K_n^j \cap K_m^i$ contains at most one point, $p$. If $x,y\in K_m^i$ and $[x,y]$ does not contain this point (or there is no such point), then $[x,y]$ lies in a component of $T\setminus K_n^j$ and so $f_n^j(x)=f_n^j(y)$. Otherwise, $x$ and $p$ lie in the closure of a component of $T\setminus K_n^j$, and the same holds for $p$ and $y$, and so $f_n^j(x)=f_n^j(p)=f_n^j(y)$. Thus, $f_n^j$ is constant on $K_m^i$.
\end{proof}

\begin{lemma}\label{lem:localzero}
If $x\in T$, $n\in\mathbb{N}$, $j\in J_n$, then $f_n^j(x)=0$ unless the arc $[r_1^1, x]$ traverses $K_n^j$.
\end{lemma}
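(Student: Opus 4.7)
The plan is to identify the condition $f_n^j(x) = 0$ in tree-theoretic terms via the nearest-point retraction onto $K_n^j$, and then to verify that the non-traversal hypothesis forces $x$ to retract to $r_n^j$. Let $\pi \colon T \to K_n^j$ denote the retraction sending each $y \in K_n^j$ to itself and each $y \in T \setminus K_n^j$ to the unique attachment point of the component of $T \setminus K_n^j$ containing $y$ (uniqueness was established in the proof of Lemma \ref{lem:localembedding}). Because $f_n^j$ was extended to $T$ by making it constant on each such component, we have $f_n^j(y) = f_n^j(\pi(y))$ for every $y \in T$; combined with the injectivity of $f_n^j$ on $K_n^j$ and the normalization $f_n^j(r_n^j) = 0$, the identity $f_n^j(x) = 0$ is equivalent to $\pi(x) = r_n^j$.

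First I would verify that $\pi(r_1^1) = r_n^j$. When $n = 1$ this is trivial since $r_n^j = r_1^1 \in K_1^1$. For $n \geq 2$, we have $r_1^1 \in T_1 \subseteq T_{n-1}$, and Lemma \ref{lem:Kproperties}(ii) gives $K_n^j \cap T_{n-1} = \{r_n^j\}$; since $T_{n-1}$ is itself a subtree, the arc $[r_1^1, r_n^j]$ lies in $T_{n-1}$ and so meets $K_n^j$ only at $r_n^j$, which is exactly the defining property of the retraction.

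Next, assume that $|[r_1^1, x] \cap K_n^j| \leq 1$. I would invoke the standard dendrite fact that, for any subtree $A \subseteq T$ and any $y, z \in T$, the intersection $[y, z] \cap A$ is either empty (in which case $y$ and $z$ lie in a common component of $T \setminus A$ and $\pi_A(y) = \pi_A(z)$) or coincides with the sub-arc of $A$ from $\pi_A(y)$ to $\pi_A(z)$. Applied with $A = K_n^j$, $y = r_1^1$, $z = x$, this means $[r_1^1, x] \cap K_n^j$ is either empty (forcing $\pi(x) = \pi(r_1^1) = r_n^j$) or equals the arc $[r_n^j, \pi(x)] \subseteq K_n^j$; in the latter case the cardinality-one constraint again yields $\pi(x) = r_n^j$. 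Hence $f_n^j(x) = f_n^j(r_n^j) = 0$.

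The main obstacle is more stylistic than substantive: neither the retraction $\pi$ nor the subtree arc-intersection identity has been named in the paper. An alternative, equivalent presentation would avoid them by a direct case split (the three cases being $x \in K_n^j$, $x$ in the same component of $T \setminus K_n^j$ as $r_1^1$, and $x$ in a different component), in each of which the key input is the single fact from Lemma \ref{lem:Kproperties}(ii) that any arc from $T_{n-1}$ into $K_n^j$ must enter through $r_n^j$.
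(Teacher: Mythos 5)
Your proof is correct and is essentially the paper's argument repackaged in the language of the nearest-point retraction: both hinge on the facts that $[r_1^1,r_n^j]\subseteq T_{n-1}$ meets $K_n^j$ only at $r_n^j$ (Lemma \ref{lem:Kproperties}(ii)) and that the extension of $f_n^j$ is constant on components of $T\setminus K_n^j$, so $f_n^j(r_1^1)=0$ and the non-traversal hypothesis collapses the two remaining cases. The direct case split you mention at the end is exactly how the paper writes it, avoiding the (true but unproved there) subtree arc-intersection fact.
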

\begin{proof}
First, observe that $f_n^j(r_1^1)=0$ for every $n\in\mathbb{N}$ and $j\in J_n$. If $n=1$ this is by definition of $f_1^1$. Otherwise, this is because $f_n^j(r_n^j)=0$ by definition of $f_n^j$ and $r_n^j$ and $r_1^1$ are joined by an arc in $T_{n-1}$ whose only intersection with $K_n^j$ is $r_n^j$. Therefore, $r_n^j$ and $r_1^1$ lie in the closure of a common component of $T\setminus K_n^j$.

Now suppose that $[r_1^1, x]$ does not traverse $K_n^j$. 

If $[r_1^1,x]$ is disjoint from $K_n^j$, then $r_1^1$ and $x$ are in the same component of $T\setminus K_n^j$ and $f_n^j(x)=f_n^j(r_1^1)=0$.

If $[r_1^1,x]$ is not disjoint from $K_n^j$, then it intersects $K_n^j$ in exactly one point, $y$. It follows that $f_n^j(r_1^1)=f_n^j(y)$ and $f_n^j(y)=f_n^j(x)$, hence $f_n^j(x)=0$.
\end{proof}

We fix constants before proceeding. Let $M=M(D)$ be as in Lemma \ref{lem:arcintersection}, and let $L=L(D), d=d(D)$ be as in Lemma \ref{lem:localembedding}. Let $N\in\mathbb{N}$ be chosen sufficiently large depending only on $L$ and $M$ (hence only on $D$); specifically, we will require that
$$ \frac{1}{2\sqrt{2N+M+1}} - L2^{3-N} > 0.$$
Let $S=2^{N}$. Finally, let $A=A(D)$ be the number of colors produced by Lemma \ref{lem:coloring} using this choice of $S$.

We now define our bi-Lipschitz embedding $f\colon T \rightarrow \RR^{Ad}$. We use the $L$-Lipschitz mappings $f_n^j\colon T \rightarrow \RR^d$ defined in Lemma \ref{lem:localembedding}. View $\RR^{Ad}$ as $\bigoplus_{\ell=1}^A \RR^d_\ell$, with each $\RR^d_\ell$ a copy of $\RR^d$.  Making a simple adjustment to $f_n^j$, we post-compose with an isometric embedding so that 
$$ f_n^j \colon T \rightarrow \RR^d_{\chi((n,j))} \subseteq \RR^{Ad},$$
where $\chi$ is the coloring from Lemma \ref{lem:coloring}, and $f_n^j(r_n^j) = 0$.

We now define $f\colon T \rightarrow \RR^{Ad}$ by
\begin{equation}\label{eq:fdef}
f(x) = \sum_{n\in \mathbb{N}} \sum_{j\in J_n} f_n^j(x).
\end{equation}
Because of our assumption that the leaf set of $T$ is finite, only finitely many of the sets $J_n$ are non-empty, and hence this sum is always finite.

\begin{proof}[Proof of Theorem \ref{thm:main}] 

Recall that, by the discussion at the beginning of this section, we have reduced the embedding problem to the case of a $1$-bounded turning tree $T$ with $\mathcal{L}(T)$ finite and $\diam(T)=\diam(\mathcal{L}(T))=1$. We will show that, under these assumptions, $f$ as defined in \eqref{eq:fdef} is $(a,b)$-bi-Lipschitz with constants $a$ and $b$, depending only on $D$, to be described below.

Fix $x,y \in T$ with $x\neq y$. 

Let $\{K_{m(i)}^{j(i)}\}_{i\in I}$ be the (finite) collection of sets $K_m^j$ that are traversed by $[x,y]$. Here we set $I$ to be a finite index set $\{1, 2, \dots, \max(I)\}$, and the sets are ordered in the natural order along the arc $[x,y]$. By Lemma \ref{lem:monotone}, there is an index $i_0\in I$ such that $m(i) > m(i+1)$ for all $i<i_0$ and $m(i) < m(i+1)$ for all $i\geq i_0$. 

For each $i\in I$, let $p_i$ be the first point on $[x,y] \cap K_{m(i)}^{j(i)}$. We also add one additional point at the end of $I$ and set $p_{\max(I)}=y$. Because $x$ is contained in a set traversed by $\gamma$ (see the remark above Lemma \ref{lem:monotone}), we also have $p_{1} = x$.

We can now write a telescoping sum:
\begin{equation}\label{eq:telescope}
 f(x) - f(y) = \sum_{i\in I} \left(f(p_i) - f(p_{i+1})\right).
\end{equation}

Let $n\in \bN$ be such that
$$ 2^{-n-1} \leq d(x,y) \leq 2^{-n}.$$


Define indices $i_*$ and $i^*$ in $I$ as follows:
$$ i_* = \min\{ i\in I: m(i)\leq n\} \text{ and } i^* = \max\{i\in I: m(i)\leq n\}.$$
(We set $i_*=i^*=i_0$ if the associated set is empty.)
By Lemmas \ref{lem:arcintersection} and \ref{lem:monotone}, we have $i^*-i_* \leq M=M(D)$. 

The restriction of $f$ to a single $K_m^j$ is $(1,L)$-bi-Lipschitz, because $f_m^j$ is $(1,L)$-bi-Lipschitz on $K_m^j$ and each $f_n^i$ for $(n,i)\neq (m,j)$ is constant on $K_m^j$. (See Lemma \ref{lem:localembedding}.) Note also that $p_i$ and $p_{i+1}$ are both always in $K_{m(i)}^{j(i)}$. Therefore, we always have
\begin{equation}\label{eq:fbound1}
 d(p_i, p_{i+1}) \leq |f(p_i) - f(p_{i+1})|\leq L\diam(K_{m(i)}^{j(i)}) \leq L2^{2-m(i)}.
\end{equation}
In addition, the bounded turning condition and the fact that $p_i\in [x,y]$ ensures that we also always have
\begin{equation}\label{eq:fbound2}
|f(p_i) - f(p_{i+1})| \leq Ld(p_i, p_{i+1}) \leq Ld(x,y).
\end{equation}

Using equations \eqref{eq:fbound1} and \eqref{eq:fbound2}, we obtain:
\begin{align*}
|f(x) - f(y)| &\leq \sum_{i\in I} |f(p_i) - f(p_{i+1})|\\
&\leq \sum_{i\in I, i<i_*}  |f(p_i) - f(p_{i+1})| + \sum_{i\in I, i_*\leq i \leq i^*}  |f(p_i) - f(p_{i+1})| + \sum_{i\in I, i>i^*}  |f(p_i) - f(p_{i+1})|\\
&\leq L\left(\sum_{i\in I, i<i_*} 2^{2-m(i)} + \sum_{i\in I, i_*\leq i \leq i^*} d(x,y)  + \sum_{i\in I, i>i^*}  2^{2-m(i)}\right)\\
&\lesssim L 2^{-m(i_*-1)} + Md(x,y) + L 2^{-m(i^*+1)}\\
&\lesssim L 2^{-n} + Md(x,y) + L2^{-n}\\
&\lesssim d(x,y) 
\end{align*}
This proves that $f$ is Lipschitz (with constant depending only on $D$).

For the lower bound, we similarly break up the sum in \eqref{eq:telescope} into three pieces, but at different points. Define indices $\underline{i}$ and $\overline{i}$ in $I$ as follows:
$$ \underline{i} = \min\{ i\in I: m(i)\leq n+N\} \text{ and } \overline{i} = \max\{i\in I: m(i)\leq n+N\}.$$
(We interpret $\underline{i}=\overline{i}=i_0$if the associated set is empty, although in fact the proof shows that this cannot happen with our choice of $N$.)
Note that $\overline{i}-\underline{i}\leq 2N + M$ by Lemmas \ref{lem:arcintersection} and \ref{lem:monotone}.

Note that 
\begin{equation}\label{eq:tails}
\sum_{i>\overline{i}} d(p_i, p_{i+1}) \leq \sum_{i>\overline{i}} \diam(K_{m(i)}^{j(i)}) \leq \sum_{i>\overline{i}} 2^{2-m(i)} \leq 2^{2-(n+N)}.
\end{equation}
The same bound holds for the sum over $i< \underline{i}$.

Now we note that if $\underline{i}\leq i<i' \leq \overline{i}$, then $f(p_i)-f(p_{i+1})$ and $f(p_{i'})-f(p_{i'+1})$ lie in orthogonal subspaces $\RR^d_\ell$ and $\RR^d_{\ell'}$. To see this, note that $p_i$ and $p_{i+1}$ are both in $K_{m(i)}^{j(i)}$, and similarly for $p_{i'}$ and $p_{i'+1}$. Therefore, recalling the last statement of Lemma \ref{lem:localembedding},
$$ f(p_i) - f(p_{i+1}) \in \RR^d_\ell,$$
where $\ell = \chi(m(i),j(i))$. Similarly, $f(p_i) - f(p_{i+1}) \in \RR^d_{\ell'}$, where $\ell' = \chi(m(i'),j(i'))$. To see that $\ell\neq \ell'$, observe that
$$ \dist(K_{m(i)}^{j(i)}, K_{m(i')}^{j(i')}) \leq d(x,y) \leq 2^{-n} \leq 2^N 2^{-\max\{m(i), m(i')\}} = S2^{-\max\{m(i), m(i')\}},$$
so $\chi((m(i),j(i)) \neq \chi((m(i'),j(i'))$ by the defining property of the coloring $\chi$ from Lemma \ref{lem:coloring}, and our choice of $S=2^N$.

Therefore, by using \eqref{eq:fbound1},  the Cauchy--Schwarz inequality, the fact that $\sum_I d(p_i,p_{i+1})\geq d(x,y)$, and  \eqref{eq:tails}, we obtain  
\begin{align*}
\left| \sum_{\underline{i}\leq i \leq \overline{i}} f(p_i)-f(p_{i+1}) \right| &= \left(\sum_{\underline{i}\leq i \leq \overline{i}} |f(p_i)-f(p_{i+1})|^2 \right)^{1/2}\\ 
&\geq \left(\sum_{\underline{i}\leq i \leq \overline{i}} d(p_i, p_{i+1})^2\right)^{1/2}\\
&\geq \frac{1}{\sqrt{\overline{i}-\underline{i}+1}} \sum_{\underline{i}\leq i \leq \overline{i}} d(p_i, p_{i+1})\\
&\geq \frac{1}{\sqrt{\overline{i}-\underline{i}+1}}\left( d(x,y) - \sum_{i<\underline{i} \text{ or } i > \overline{i}} d(p_i, p_{i+1})\right)\\
&\geq \frac{1}{\sqrt{2N+M+1}}( d(x,y)  - 2^{3-(n+N)})\\
&\geq \left(\frac{1}{\sqrt{2N+M+1}} - 2^{3-N}\right)d(x,y)\\
&\geq \frac{1}{2\sqrt{2N+M+1}} d(x,y).
\end{align*}

Again using \eqref{eq:tails} and the fact that the restriction of $f$ to each $K_m^j$ is $L$-Lipschitz, we therefore have
\begin{align*}
|f(x)-f(y)| &\geq \left|\sum_{\underline{i}\leq i \leq \overline{i}} f(p_i)-f(p_{i+1})\right|  - \sum_{i<\underline{i} \text{ or } i > \overline{i}} |f(p_i)-f(p_{i+1})|\\
&\geq \left|\sum_{\underline{i}\leq i \leq \overline{i}} f(p_i)-f(p_{i+1})\right|  - L\sum_{i<\underline{i} \text{ or } i > \overline{i}} d(p_i,p_{i+1})\\
&\geq \frac{1}{2\sqrt{2N+M+1}} d(x,y) -  L2^{3-(n+N)}\\
&\geq \left(\frac{1}{2\sqrt{2N+M+1}} - L2^{3-N}\right) d(x,y)\\
&\gtrsim d(x,y)
\end{align*}
by our choice of $N$.
\end{proof}

\bibliography{quasitrees-ref}
\bibliographystyle{amsbeta}

\end{document}